\newcommand{\br}{\overline}
\newcommand{\C}{\mathbb C}
\newcommand{\N}{\mathbb N}
\theoremstyle{plain}
\newtheorem{theorem}{Theorem}
\newtheorem{lemma}[theorem]{Lemma}
\newtheorem{conjecture}{Conjecture}
\newtheorem{prop}[theorem]{Proposition}
\theoremstyle{definition}
\newtheorem{definition}[theorem]{Definition}
\theoremstyle{remark}
\DeclareMathOperator{\dist}{\textup{\text{dist}}}
\DeclareMathOperator{\diam}{\textup{\text{diam}}}
\DeclareMathOperator{\inter}{\textup{\text{int}}}
\numberwithin{equation}{section}
\numberwithin{theorem}{section}
\numberwithin{conjecture}{section}
\begin{document}
\title{Non-removability of Sierpi\'nski carpets}
\author{Dimitrios Ntalampekos}
\address{Institute for Mathematical Sciences, Stony Brook University, Stony Brook, NY 11794, USA.}
\email{dimitrios.ntalampekos@stonybrook.edu}

\date{\today}
\thanks{The author was partially supported by NSF Grant DMS-1506099}
\keywords{Sierpi\'nski carpets, Removability, Conformal maps, Quasiconformal maps}
\subjclass[2010]{Primary 30C20, 30C35; Secondary 30C62.}

\begin{abstract}
We prove that all Sierpi\'nski carpets in the plane are non-removable for (quasi)conformal maps. More precisely, we show that for any two Sierpi\'nski carpets $S,S'\subset \widehat\C$ there exists a homeomorphism $f\colon \widehat \C\to \widehat\C$ that is conformal in $\widehat\C\setminus S$ and it maps $S$ onto $S'$. As a corollary, we obtain a partial answer to a question of Bishop \cite{Bishop:flexiblecurves}, whether any planar compact set with empty interior and positive measure can be mapped to a set of measure zero with an exceptional homeomorphism of the plane, conformal off that set.
\end{abstract}
\maketitle

\section{Introduction}

In recent work \cite[Theorem 1.8]{Ntalampekos:gasket} the current author proved that the Sierpi\'nski gasket, also called the Sierpi\'nski triangle, is non-removable for conformal maps. In the same paper the author provided some evidence that \textit{all} homeomorphic copies of the Sierpi\'nski gasket should be  non-removable for conformal maps; see \cite[Theorem 1.7]{Ntalampekos:gasket}. This gave birth to the conception that ``some sets should be non-removable for topological reasons". Sierpi\'nksi carpets are topologically ``larger" than gaskets and provide a perfect candidate to test this heuristic. In this work we prove that this is actually the case:

\begin{theorem}\label{theorem:nonremovable}
All Sierpi\'nski carpets $S\subset \widehat \C$ are non-removable for conformal maps.
\end{theorem}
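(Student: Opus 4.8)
The plan is to reduce the theorem to the construction of a single non-Möbius homeomorphism of $\widehat\C$ that is conformal off $S$, and to build such a map by an iterative conformal surgery on the peripheral disks of the carpet. First I would record the topological setup: by Whyburn's characterization a carpet is $S=\widehat\C\setminus\bigcup_i D_i$, where the $D_i$ are Jordan domains with pairwise disjoint closures, with $\diam D_i\to 0$, and with $\bigcup_i D_i$ dense. Thus $\widehat\C\setminus S=\bigcup_i D_i$, so ``conformal off $S$'' means exactly ``conformal on each $D_i$,'' and any homeomorphism $f$ conformal off $S$ restricts to a Riemann map $g_i\colon D_i\to D_i':=f(D_i)$ with $f(S)=S'$ again a carpet. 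To prove non-removability it suffices to exhibit one such $f$ that is not Möbius. I would split this by area: a Möbius map is a smooth diffeomorphism, so it sends null sets to null sets and positive-area sets to positive-area sets; hence if I can arrange $f(S)=S'$ with $|S'|>0$ when $|S|=0$, then $f$ cannot be Möbius and $S$ is non-removable. The positive-area case is in fact soft—solving a Beltrami equation with any nonzero $\mu\in L^\infty$ supported on $S$ yields a non-Möbius quasiconformal map that is $1$-quasiconformal, hence conformal, on the open set $\bigcup_i D_i$ where $\mu=0$—so the genuine content is the null case, where the non-conformality must be concentrated on a measure-zero set and no $L^\infty$ Beltrami coefficient can detect it. This is precisely the phenomenon behind Bishop's question.

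The core construction is therefore: given a carpet $S$ with $|S|=0$, produce a homeomorphism conformal off $S$ that maps $S$ onto a carpet $S'$ of positive area; equivalently I must choose target domains $D_i'$ and conformal maps $g_i\colon D_i\to D_i'$ assembling to a homeomorphism, with $\sum_i|D_i'|$ bounded away from $|\widehat\C|$ so that $|S'|=|\widehat\C|-\sum_i|D_i'|>0$. I would build $f$ as a locally uniform limit of homeomorphisms $f_n$ by a scale-ordered greedy surgery. Enumerate the disks so that $\diam D_1\ge \diam D_2\ge\cdots$, set $f_0=\id$, and pass from $f_{n-1}$ to $f_n$ by a map that (i) makes $f_n$ conformal on $D_n$ with image a prescribed round disk $D_n'$ of a definite size, chosen to force a fixed proportional area contraction at scale $n$, while preserving conformality on $D_1,\dots,D_{n-1}$; and (ii) corrects the boundary mismatch on $\partial D_n$ by a quasiconformal interpolation supported in a thin collar $A_n$ about $\overline{D_n}$. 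The scale ordering is what makes this consistent: the finitely many disks $D_1,\dots,D_{n-1}$ all have diameter at least $\diam D_n$ and disjoint closures from $D_n$, so $A_n$ can be taken thin enough to avoid them and to meet only disks $D_m$ with $m>n$, which have not yet been conformalized. Consequently no surgery destroys conformality already achieved, and since $A_k$ for $k>m$ avoids $\overline{D_m}$, the disk $D_m$ is never touched again after stage $m$. The Koebe distortion theorem together with $\diam D_n\to 0$ controls the dilatation of the $n$-th surgery and the diameter of $A_n$, so that the dilatations are summable and the displacements $\sup|f_n-f_{n-1}|$ form a summable series.

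With these estimates, convergence and homeomorphy follow from a normal-families argument. Summability of displacements gives uniform convergence $f_n\to f$; the analogous estimates for $f_n^{-1}$ give uniform convergence of the inverses, and a two-sided modulus-of-continuity bound shows that $f$ is injective. Because $D_i$ is fixed conformally at stage $i$ and is disjoint from the supports $A_n$ of all later surgeries, $f|_{D_i}=g_i$ is conformal, so $f$ is conformal on $\bigcup_i D_i=\widehat\C\setminus S$; and the prescribed contractions make $\sum_i|D_i'|$ stay below $|\widehat\C|$, whence $|f(S)|>0$ and $f$ is not Möbius. Running the same machine with the inequality reversed (expanding the target disks to exhaust the area) handles carpets with $|S|>0$, and since the relation ``connected by a homeomorphism conformal off the carpet'' is transitive under composition, the scheme upgrades to the stronger statement that any two carpets are so related.

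The main obstacle is step (ii) together with the injectivity of the limit. Correcting the boundary values on $\partial D_n$ forces me to work in a neighborhood that inevitably contains infinitely many smaller disks, so the surgeries interfere, and I must bound the distortion and displacement of each surgery sharply enough—uniformly in $n$ and in terms of $\diam D_n$ alone—that the infinitely many corrections accumulate to a genuine homeomorphism rather than a degenerate or non-injective limit. Controlling $f_n^{-1}$ simultaneously, so that the measure-zero carpet is not crushed while its complementary disks are conformally contracted to leave positive residual area, is the quantitative crux of the proof.
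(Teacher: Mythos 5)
Your overall architecture---reduce to exhibiting one non-M\"obius homeomorphism conformal off $S$ by changing the measure of $S$, dispose of the positive-area case with a Beltrami coefficient supported on $S$, and in the null case build the map as a uniform limit of surgeries that are conformal on the peripheral disks---is the right frame, and it matches how the paper deduces Theorem \ref{theorem:nonremovable} from Theorem \ref{theorem:homeo}. But the ``quantitative crux'' you flag in your last paragraph is not a deferrable technicality: it is exactly where the greedy scale-ordered surgery breaks down, and the estimate you propose to close it with (Koebe distortion plus $\diam D_n\to 0$) does not suffice. The displacement of the $n$-th surgery is bounded by $\diam f_{n-1}(\br{D_n}\cup A_n)$, not by $\diam D_n$. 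The collar $A_j$ of an earlier disk $D_j$ necessarily contains infinitely many entire later disks $D_n$ (every neighborhood of a point of $\partial D_j\subset S$ does, since $S$ has empty interior and only finitely many peripheral disks exceed any diameter threshold), and the interpolating homeomorphism on $A_j$ must carry a thin collar onto the fat annulus between the shrunken target disk $D_j'$ and the unchanged outer boundary curve---this stretching is forced by the very area contraction you need. Consequently $f_j(D_n)$ can have diameter comparable to $\diam D_j$ for infinitely many $n$, so $\sup|f_n-f_{n-1}|$ need not tend to zero, the sequence need not be uniformly Cauchy, and the limit need not exist, let alone be injective with a continuous inverse. Koebe controls the conformal maps $D_n\to D_n'$ but says nothing about the topological interpolations in the collars or about the sizes of the images $f_{n-1}(D_n)$; and demanding a uniform modulus of continuity for the whole sequence is essentially the conclusion you are trying to reach, so the argument is circular without an extra mechanism.

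The paper's resolution of precisely this difficulty is Lemma \ref{lemma:subdivisions}, built on Whyburn's subdivision technique and Moore's decomposition theorem: at stage $k$ one produces $\varepsilon_k$-subdivisions of \emph{both} the source and the target carpet into $2$-cells of diameter at most $\varepsilon_k$ that correspond under $f_k$, and every stage-$(k+1)$ modification is confined to a single $2$-cell on each side. This yields the two-sided bounds $\|f_m-f_k\|_\infty\le \varepsilon_k$ and $\|f_m^{-1}-f_k^{-1}\|_\infty\le \varepsilon_k$ for free, which is what makes the limit a homeomorphism; the measure change then comes not from forcing contraction by hand but simply from choosing the target $S'$ to be a concrete carpet of positive area. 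Some device of this kind---simultaneous control of the decomposition in the domain and in the range---is the missing idea in your proposal. (Your closing remark that transitivity upgrades the construction to ``any two carpets are so related'' also presupposes that every carpet can be connected to a fixed reference carpet, which is essentially Theorem \ref{theorem:homeo} itself and is not delivered by the surgery as described.)
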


This result resolves a conjecture of the author \cite[Conjecture 1]{Ntalampekos:gasket}. Here, we pose another more general conjecture:
\begin{conjecture}Every planar compact set containing a homeomorphic copy of $C\times [0,1]$ is non-removable for conformal maps, where $C$ is the middle-thirds Cantor set.
\end{conjecture}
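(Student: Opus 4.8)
\section*{Proof proposal for the conjecture}

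The plan is to prove the stronger monotone statement that a single embedded homeomorphic copy $E$ of $C\times[0,1]$ is already non-removable, and then to conclude for $K$. Monotonicity is the easy half: if $E\subseteq K$ and $f\colon\widehat\C\to\widehat\C$ is a homeomorphism that is conformal on $\widehat\C\setminus E$ and not Möbius, then, because $\widehat\C\setminus K\subseteq\widehat\C\setminus E$, the same $f$ is conformal on $\widehat\C\setminus K$ and non-Möbius, so $K$ is non-removable. Thus everything reduces to constructing, for each embedded copy $E$, a non-Möbius homeomorphism of the sphere that is conformal off $E$. One cannot simply transport such a map from the standard product $P=C\times[0,1]$ to $E$: the homeomorphism $P\to E$ is purely topological, and conformality is not preserved under it. This is the first structural obstacle, and it forces a construction that uses only the intrinsic product structure of $E$, namely that $E$ is a union of Cantor-many pairwise disjoint arcs $\gamma_c$ (for $c\in C$) depending continuously on $c$.

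The essential analytic difficulty is that $E$ has zero area: $P$ has Hausdorff dimension $1+\log 2/\log 3<2$ and two-dimensional measure zero, and any embedded copy that we can hope to control is null as well. Consequently the usual measurable Riemann mapping approach is unavailable: a Beltrami coefficient supported on a null set vanishes almost everywhere, and its normal solution is $1$-quasiconformal, hence Möbius. The map we seek is therefore genuinely conformal off $E$ while being non-Möbius, so it cannot be produced by a single quasiconformal surgery; it must arise as a limit of conformal maps. This is exactly the mechanism behind the flexible curves of Bishop \cite{Bishop:flexiblecurves} and behind Theorem \ref{theorem:nonremovable}, and it is the template I would follow.

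Concretely, I would exhaust $E$ by positive-area thickenings $E_n\downarrow E$ adapted to the level-$n$ Cantor combinatorics (for the standard $P$, the $2^n$ closed strips over the level-$n$ intervals), and on each $E_n$ perform a quasiconformal deformation $f_n$ built from a coherent shear or stretch along the arcs, prescribed so that the induced boundary correspondence across the two sides of each fiber approximates a fixed non-Möbius welding. The four steps are then: (i) choose the fiberwise deformation data so that $f_n$ realizes a definite, scale-invariant distortion of a chosen curve family linking the fibers; (ii) establish a modulus-of-continuity bound for $f_n$ that is uniform in $n$, giving precompactness and a uniformly convergent subsequence $f_n\to f$ with $f$ a homeomorphism of $\widehat\C$; (iii) observe that $\mu_{f_n}\to 0$ almost everywhere off $E$, so that, $f$ being a locally uniform limit of conformal maps off $E$, it is conformal on $\widehat\C\setminus E$; and (iv) show that the distortion of the chosen conformal invariant (the extremal length of the linking family, or the harmonic measure of a marked boundary arc) survives the limit, certifying that $f$ is not Möbius.

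The main obstacle is step (ii) together with its compatibility with (iv): one must keep the accumulated distortion bounded below (so the limit is non-Möbius and does not degenerate) while the support of the deformation shrinks to a null set (so the limit is conformal off $E$), and simultaneously prevent collapse of the fibers (so the limit is injective). For the standard product this is a delicate but plausible Cantor-combinatorial estimate; the serious difficulty is uniformity over all embeddings, where the arcs $\gamma_c$ may be non-rectifiable and the complementary gaps between neighboring fibers geometrically degenerate, so that neither the uniform distortion bound nor the non-degeneracy of the linking modulus is automatic. Controlling this interaction between shrinking support and persistent distortion, purely from the topological product structure, is where the real work---and the reason this remains a conjecture---lies.
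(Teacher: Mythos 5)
You are attempting to prove a statement that the paper itself does not prove: it is posed as an open conjecture, so there is no proof in the paper to compare against, and your text should be judged as a research program rather than as a proof. Read that way, the only part that is actually complete is the easy monotonicity reduction, which is correct: for this notion of removability (homeomorphisms of $\widehat\C$ conformal off the set), a witness for $E\subseteq K$ is automatically a witness for $K$, since $\widehat\C\setminus K\subseteq\widehat\C\setminus E$. You also correctly identify the central analytic obstruction, namely that $C\times[0,1]$ and any tractable embedded copy have zero area, so the measurable Riemann mapping theorem produces only M\"obius maps and the desired homeomorphism must arise as a limit. But everything after that is a wish list: in steps (i)--(iv) you never construct the deformations $f_n$ for an arbitrary embedding, never prove the uniform equicontinuity needed for precompactness (of $f_n$ \emph{and} $f_n^{-1}$, or the limit may fail to be injective), and never exhibit a conformal invariant whose distortion provably survives the limit while the support of the Beltrami coefficients shrinks to a null set. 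These are not routine verifications; as you concede yourself, for wild embeddings the fibers $\gamma_c$ may be non-rectifiable and the gaps between them geometrically degenerate, so no lower bound on the linking modulus is available. The proposal therefore has a genuine gap --- in fact it is entirely the gap --- and it does not establish the conjecture even for the standard product $C\times[0,1]$.

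One factual correction: you assert that your limiting mechanism is ``exactly the mechanism behind Theorem \ref{theorem:nonremovable},'' but it is not. The paper's proof of Theorems \ref{theorem:nonremovable} and \ref{theorem:homeo} is purely topological in the spirit of Whyburn \cite{Whyburn:theorem}: it builds corresponding $\varepsilon_k$-subdivisions of two carpets via Moore's theorem, obtains homeomorphisms $f_k$ that are honestly conformal on finitely many peripheral disks (no quasiconformal estimates, no Beltrami coefficients, no moduli of curve families), and gets non-M\"obiusness for free by mapping a measure-zero carpet onto a positive-area carpet. That machinery is exactly what is unavailable for your conjecture: it requires the complement of the set to decompose into Jordan regions with disjoint closures and shrinking diameters, whereas the complement of a copy of $C\times[0,1]$ is a single connected domain, and a set containing such a copy need not contain any Sierpi\'nski carpet (each connected subset of $C\times[0,1]$ lies in one arc), so no monotonicity reduction to Theorem \ref{theorem:nonremovable} is possible. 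Your Bishop-style program \cite{Bishop:flexiblecurves} is thus a reasonable, genuinely different line of attack, but as written it identifies the difficulty rather than resolving it.
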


We provide some necessary definitions before stating our other results. 

\begin{definition}\label{definition:removable}
Let $K\subset \widehat \C$ be a compact set. We say that $K$ is \textit{conformally removable} or \textit{removable for conformal maps} if any homeomorphism $f\colon \widehat \C \to \widehat \C$ that is conformal in $\widehat \C\setminus K$ is actually conformal in $\widehat \C$ and thus, it is M\"obius.
\end{definition}
An equivalent notion is that of quasiconformal removability. We direct the reader to \cite{Younsi:removablesurvey}, \cite{Ntalampekos:gasket}, and the references therein for more background.

The \textit{standard Sierpi\'nski carpet} is constructed by subdividing the unit square $[0,1]^2$ into nine squares of sidelength $1/3$ and removing the middle square, and then proceeding inductively in each of the remaining eight squares. It can be easily proved that the standard Sierpi\'nski carpet is non-removable for conformal maps; see e.g.\ the discussion in \cite[Section 1.1]{Ntalampekos:gasket}. 

In general, a \textit{Sierpi\'nski carpet} $S\subset \widehat \C$ is a set homeomorphic to the standard carpet. It is a fundamental result of Whyburn \cite{Whyburn:theorem} that a set $S$ is a Sierpi\'nski carpet if and only if $S$ has empty interior and $S=\widehat{\C} \setminus \bigcup_{i\in \N} Q_i$, where $\{Q_i\}_{i\in \N}$ is a family of Jordan regions with disjoint closures and (spherical) diameters converging to $0$. The regions $Q_i$, $i\in \N$, are called the \textit{peripheral disks} and the boundaries $\partial Q_i$, $i\in \N$, are called the \textit{peripheral circles} of $S$. Our main result is the following.

\begin{theorem}\label{theorem:homeo}
Let $S,S'\subset \widehat\C$ be Sierpi\'nski carpets. Then there exists a homeomorphism $f\colon \widehat\C\to \widehat\C$ with $f(S)=S'$ such that $f$ is conformal on $\widehat\C\setminus S$. Moreover, the image of any finite collection of peripheral circles can be prescribed: if $C_1,\dots,C_N$ and $C_1',\dots,C_N'$ are peripheral circles of $S$ and $S'$, respectively, then $f$ can be chosen such that $f(C_i)=C_i'$ for all $i=1,\dots,N$.
\end{theorem}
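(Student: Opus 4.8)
The map $f$ will be built so that it is conformal precisely on the complement $U=\widehat\C\setminus S=\bigcup_i Q_i$ of the carpet. Since $U$ is a disjoint union of Jordan domains, this only requires that $f$ restrict to a conformal homeomorphism of each peripheral disk $Q_i$ onto some peripheral disk $Q'_{\sigma(i)}$ of $S'$, for a bijection $\sigma\colon\N\to\N$ to be constructed. The key simplification is that, once such an $f$ is produced as a \emph{homeomorphism} of $\widehat\C$, it is automatically conformal on $U$, and because $\sigma$ is a bijection and $f$ carries closures of peripheral disks to closures of peripheral disks, we get $f(U)=U'$ and hence $f(S)=S'$; no separate continuity argument on $S$ is needed. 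The plan is therefore to produce $f$ as a uniform limit $f=\lim_n f_n$ of homeomorphisms $f_n\colon\widehat\C\to\widehat\C$, each conformal on a finite, increasing family of peripheral disks, whose inverses $f_n^{-1}$ also converge uniformly; such a limit is automatically a homeomorphism with the stated properties.

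First I would fix the data. Enumerate the peripheral disks of $S$ and $S'$ as $Q_1,Q_2,\dots$ and $Q_1',Q_2',\dots$ with $\diam Q_i\to 0$ and $\diam Q_i'\to 0$. Using the Riemann mapping theorem together with Carath\'eodory's theorem (for boundary homeomorphisms) and a Schoenflies-type extension over the finitely connected complement, build a base homeomorphism $f_0$ mapping the disks bounded by $C_1,\dots,C_N$ conformally onto those bounded by $C_1',\dots,C_N'$, and record these as the initially matched pairs. A back-and-forth scheme over the two enumerations then forces $\sigma$ to be a bijection: on forward steps I match the unmatched source disk of smallest index, and on backward steps I guarantee that the unmatched target disk of smallest index is used.

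The inductive step is a local conformal surgery. Suppose $f_n$ is a homeomorphism, conformal on the currently matched disks, and I wish to match a new source disk $Q_i$ to a chosen target disk $Q'_j$ with $Q'_j\subset f_n(Q_i)$. I choose a Jordan neighborhood $V$ of $\overline{Q_i}$ so small that $\overline V$ is disjoint from all matched source disks; then $W:=f_n(V)$ is a Jordan domain disjoint from all matched target disks and containing $\overline{Q'_j}$. I set $f_{n+1}=f_n$ outside $V$, let $f_{n+1}|_{Q_i}$ be a conformal homeomorphism $Q_i\to Q'_j$ (extended to $\overline{Q_i}$ by Carath\'eodory), and extend $f_{n+1}$ across the topological annulus $V\setminus\overline{Q_i}$ to a homeomorphism onto $W\setminus\overline{Q'_j}$ agreeing with the prescribed boundary values on $\partial V$ and $\partial Q_i$; such an annular extension always exists. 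Then $f_{n+1}$ is a homeomorphism of $\widehat\C$, it leaves every previously matched disk and its conformal map untouched, and it adds $Q_i\mapsto Q'_j$ to the conformally matched pairs. Since $f_{n+1}$ and $f_n$ differ only inside $V$, one has $\sup_{\widehat\C} d(f_{n+1},f_n)\le\diam W$ and $\sup_{\widehat\C} d(f_{n+1}^{-1},f_n^{-1})\le\diam V$. Crucially, I would arrange the surgery regions to \emph{nest}: each step subdivides a region into the new disk plus a leftover, and later surgeries take place inside leftovers, so along every nested chain of regions the diameters tend to $0$. The displacement of any fixed point between stages $n$ and $m$ is then controlled by the diameter of the smallest region currently being subdivided there, giving uniform convergence of $f_n$ (and symmetrically of $f_n^{-1}$) without any need for a summable series of diameters.

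The main obstacle is to run the back-and-forth so that this nested refinement has \emph{mesh tending to zero on both sides simultaneously}: every portion of $S$ must eventually be enclosed in arbitrarily small matched source disks, and every portion of $S'$ in arbitrarily small matched target disks. This is a genuine tension, since forcing surjectivity of $\sigma$ can compel me to match an inconveniently located or relatively large target disk, working against the diameter control that drives convergence. I expect to resolve it through extremal choices --- on forward steps filling the largest available target disk inside the relevant region, and on backward steps matching a forced target disk to a nearby source disk of comparable, controlled size --- together with the freedom to take each surgery neighborhood $V_n$ as close as desired to $\overline{Q_{i_n}}$. Proving the quantitative estimate that the image regions $W_n$ (and preimage regions $V_n$) of surgeries localized near a given point shrink to $0$, while both enumerations are exhausted, is the technical heart of the argument; everything else is bookkeeping.
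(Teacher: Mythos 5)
Your high-level strategy --- build $f$ as a uniform limit of sphere homeomorphisms $f_n$, each conformal on a growing finite family of peripheral disks, with a back-and-forth scheme exhausting both families and uniform control on both $f_n$ and $f_n^{-1}$ --- is exactly the strategy of the paper, and your reduction of $f(S)=S'$ to bijectivity of the disk correspondence is fine. But there are two genuine gaps. First, the backward step of your surgery cannot in general be performed as stated: you require $Q_j'\subset f_n(Q_i)$, i.e.\ $f_n^{-1}(Q_j')\subset Q_i$, but when $Q_j'$ is a \emph{forced} target disk its preimage $f_n^{-1}(Q_j')$ is just some Jordan region (the earlier annular extensions do not respect the carpet structure), and it will typically meet $S$ rather than sit inside a single peripheral disk; then no admissible $Q_i$ exists. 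Repairing this by only asking $f_n^{-1}(\overline{Q_j'})\subset V$ forces $\diam V\ge \diam f_n^{-1}(\overline{Q_j'})$, which is precisely the quantity you cannot control, and the convergence of $f_n^{-1}$ is lost.

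Second, and more fundamentally, the step you defer as ``the technical heart'' --- that the surgery regions can be nested with diameters tending to $0$ simultaneously in source and target --- is the real content of the theorem, and neighborhoods of single peripheral disks are the wrong regions to nest: the leftover $V\setminus\overline{Q_i}$ is an annulus whose boundary meets infinitely many peripheral circles, so intersecting it with $S$ does not produce sub-carpets and the construction cannot recurse. The paper's missing ingredient is an $\varepsilon$-subdivision lemma (Lemma 2.3, going back to Whyburn): via Moore's decomposition theorem one collapses all sufficiently small peripheral disks to points, subdivides the quotient sphere into small $2$-cells whose boundaries avoid the countably many collapse points, and pulls back to obtain finitely many small Jordan $2$-cells whose \emph{boundaries lie inside the carpet} and meet no peripheral circle beyond the finitely many already matched. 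Doing this for $S$ and $S'$ simultaneously, in correspondence under a homeomorphism extending the prescribed conformal maps on the matched disks, yields corresponding families of sub-carpets on which the whole construction recurses; these cells, not neighborhoods of individual disks, are the nested regions that give uniform convergence on both sides while exhausting both families of peripheral disks. Without something equivalent to this subdivision, the proposal does not close.
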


One can easily construct carpets $S'$ with positive Lebesgue measure. The theorem, combined with this fact, proves immediately Theorem \ref{theorem:nonremovable}, since a map $f$ that sends a carpet $S$ of measure zero or positive measure to a carpet $S'$ of positive measure or measure zero, respectively, cannot be M\"obius. Our proof of Theorem \ref{theorem:homeo} is topological and utilizes the ideas of Whyburn \cite{Whyburn:theorem}.

Using Theorem \ref{theorem:homeo} we obtain a partial answer to a question raised by Bishop \cite[Question 3]{Bishop:flexiblecurves}. He asked whether any compact set $K\subset \C$ with empty interior and positive area can be mapped to a set of measure zero with a homeomorphism $f\colon \C\to \C$ that is conformal on $\C\setminus K$. A partial answer to that question was given by Kaufman and Wu \cite[Theorem 3]{KaufmanWu:removable}, where the authors proved that there exists a \textit{subset} of $K$ with positive, strictly smaller measure such that the answer to the question is positive for that subset. We prove that one can \textit{``enlarge"} $K$ to a set $L$ by attaching to it a small set of measure zero and Hausdorff dimension arbitrarily close to $1$, so that the answer is positive for $L$:

\begin{prop}\label{prop:zeromeasure}
Let $K\subset \C$ be a compact set with empty interior. Then for each $\varepsilon>0$ there exists a compact set $L\supset K$ such that the Hausdorff dimension of $L\setminus K$ is at most $1+\varepsilon$ and the following holds:
there exists a homeomorphism $f\colon \C\to \C$ that is conformal on $\C\setminus L$ such that $f(L)$ has Hausdorff dimension at most $1+\varepsilon$ and in particular Lebesgue measure zero.
\end{prop}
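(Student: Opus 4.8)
The plan is to reduce everything to Theorem \ref{theorem:homeo}. It suffices to produce a Sierpi\'nski carpet $L\supset K$ with $\dim_H(L\setminus K)\le 1+\varepsilon$, together with a Sierpi\'nski carpet $L'$ satisfying $\dim_H L'\le 1+\varepsilon$: Theorem \ref{theorem:homeo} then furnishes a homeomorphism $f\colon\widehat\C\to\widehat\C$, conformal on $\widehat\C\setminus L$, with $f(L)=L'$. Since $K,L,L'$ are all compact subsets of $\C$, the point $\infty$ lies outside $L$, so $p:=f(\infty)\notin L'$; post-composing with a M\"obius transformation sending $p$ to $\infty$ (this preserves conformality off $L$ and, being bi-Lipschitz near the compact set $L'$, preserves its Hausdorff dimension) we may assume $f(\infty)=\infty$, and the restriction $f|_{\C}$ is the desired homeomorphism of $\C$.

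For the target $L'$ I would exhibit carpets of dimension arbitrarily close to $1$ by a product construction. Fix a self-similar Cantor set $E\subset[0,1]$ with $0,1\in E$ and $\dim_H E=\alpha$, where $\alpha\in(0,1)$ will be chosen $\le\varepsilon$; its complementary intervals $\{I_n\}$ have pairwise disjoint closures contained in $(0,1)$. Set
\[ G:=(E\times[0,1])\cup([0,1]\times E), \]
so that $[0,1]^2\setminus G=\bigsqcup_{n,m} I_n\times I_m$. The rectangles $I_n\times I_m$ are Jordan regions whose closures $\br{I_n}\times\br{I_m}$ are pairwise disjoint and contained in $(0,1)^2$, with diameters tending to $0$; together with the unbounded region $\widehat\C\setminus[0,1]^2$ they form the peripheral disks of $G$, and $G$ is nowhere dense. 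By Whyburn's characterization $G$ is a Sierpi\'nski carpet, and $\dim_H G=1+\dim_H E=1+\alpha$, using the standard fact $\dim_H(A\times[0,1])=\dim_H A+1$. Thus any $\alpha\le\varepsilon$ gives a carpet $L':=G$ with $\dim_H L'\le 1+\varepsilon$.

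To construct $L$ I would fill the complement of $K$ with scaled copies of the packing above. Choose a square $B$ with $K\subset\inter(B)$ and let $V:=\inter(B)\setminus K$, an open set; take a Whitney decomposition of $V$ into dyadic squares $\{W_j\}$ with pairwise disjoint interiors. In each $W_j$ place the affine image of the packing $\{I_n\times I_m\}$ of the unit square, obtaining peripheral disks whose closures lie in $\inter(W_j)$, and let $G_j\subset W_j$ be the corresponding scaled copy of $G$. Taking as peripheral disks the unbounded region $\widehat\C\setminus B$ together with all these inner disks, we obtain
\[ L:=K\cup\partial B\cup\bigcup_j G_j. \]
The inner disks have closures in the pairwise disjoint open squares $\inter(W_j)$, hence pairwise disjoint closures, all disjoint from $\br{\widehat\C\setminus B}$; their diameters tend to $0$ because $V$ is bounded; and they are dense in $V$, hence in $\inter(B)$, so $L$ has empty interior. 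By Whyburn's theorem $L$ is a Sierpi\'nski carpet containing $K$. Finally $L\setminus K\subset\partial B\cup\bigcup_j G_j$, and by countable stability of Hausdorff dimension $\dim_H\big(\bigcup_j G_j\big)=\sup_j\dim_H G_j=1+\alpha$, so $\dim_H(L\setminus K)\le 1+\alpha\le 1+\varepsilon$.

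The two delicate points are verifying Whyburn's hypotheses for $L$---in particular that the global collection of peripheral disks has pairwise disjoint closures and diameters converging to $0$, which is why the grid $G_j$ is arranged to absorb all of $\partial W_j$ while the disks stay compactly inside $W_j$---and controlling the dimension of the residual set $L\setminus K$ uniformly over the infinitely many Whitney squares. The latter is exactly where countable stability of Hausdorff dimension does the work: although each $G_j$ contributes positive one-dimensional content, the supremum of their dimensions is still $1+\alpha$, so no blow-up occurs. I expect the dimension bookkeeping for the product set $G$ and the checking of disjointness of closures across adjacent Whitney squares to be the main technical, though routine, obstacles.
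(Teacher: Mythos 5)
Your overall strategy is the paper's: build a Sierpi\'nski carpet $L\supset K$ with low-dimensional residue by filling a Whitney/dyadic decomposition of $B\setminus K$ with low-dimensional carpets, map it by Theorem \ref{theorem:homeo} onto a fixed low-dimensional carpet, and normalize at $\infty$ by a M\"obius transformation. However, there is a genuine gap in your construction of the low-dimensional carpet: the set $G=(E\times[0,1])\cup([0,1]\times E)$ is \emph{not} a Sierpi\'nski carpet. Its complementary Jordan regions are the rectangles $I_n\times I_m$ (plus the unbounded region), and their diameters do not converge to $0$: if $I_1$ is the largest complementary interval of $E$, then $\diam(I_1\times I_m)\geq |I_1|>0$ for \emph{every} $m$, so infinitely many peripheral rectangles have diameter bounded below. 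Whyburn's characterization is an equivalence, so this failure is fatal; equivalently, $G$ contains $E\times[0,1]$ and is not locally connected at points $(x,y)$ with $x\in E$, $y\notin E$, whereas every Sierpi\'nski carpet is locally connected. The same defect infects each $G_j$ and hence $L$: already inside a single Whitney square $W_j$ the collection of inner peripheral disks violates the ``diameters tending to $0$'' condition, so $L$ is not a carpet and Theorem \ref{theorem:homeo} does not apply.

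The fix is what the paper does: replace $G$ by a genuine self-similar \emph{square carpet} of Hausdorff dimension at most $1+\varepsilon$ (e.g.\ subdivide $[0,1]^2$ into $n\times n$ subsquares, remove the open middle $(n-2)\times(n-2)$ block as a single Jordan region, and iterate on the $4n-4$ boundary subsquares; the dimension is $\log(4n-4)/\log n\to 1$). Using such a carpet both as the target $L'$ and as the filler $S_Q$ in each dyadic square of $B\setminus K$ makes your argument go through: within each square only finitely many peripheral disks exceed any given diameter, only finitely many squares exceed any given diameter, and the boundary of the unbounded peripheral disk of $S_Q$ coincides with $\partial Q$, so the global collection of peripheral disks of $L$ has pairwise disjoint closures and diameters tending to $0$. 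The rest of your write-up (the reduction to Theorem \ref{theorem:homeo}, the M\"obius normalization, the Whyburn verification for $L$, and the countable stability of Hausdorff dimension) matches the paper and is correct.
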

In fact $L$ can be taken to lie in the $\varepsilon$-neighborhood of $K$ if we allow $f$ to be quasiconformal, but we will not go into details for the sake of brevity.

\subsection*{Acknowledgments}
I would like to thank my PhD advisor Mario Bonk at UCLA for many useful conversations on the problem of non-removability of carpets and for his notes on the proof of Whyburn, which were very enlightening. I also thank Stanislav Smirnov for reigniting my interest on the problem by asking me the question whether carpets are removable and the anonymous referee for reading the manuscript carefully and providing useful comments.

\section{Proof of Theorem \ref{theorem:homeo}}

In the entire section we only use the spherical metric of $\widehat{\C}$. We start with the following definition adapted from \cite{Whyburn:theorem}. 

\begin{definition}\label{def:subdivision}
Let $\varepsilon>0$ and $Y$ be a closed subset of $\widehat{\C}$. A partition $\{Y_{\alpha}\}$ of $Y$ is called an \textit{$\varepsilon$-subdivision} if the collection $\{Y_{\alpha}\}$ consists of finitely many closed Jordan regions, also called $2$-cells, with disjoint interiors and diameter less than $\varepsilon$.

A partition $\{S_{\alpha}\}$ of a Sierpi\'nski carpet $S\subset \widehat{\C}$ into Sierpi\'nski carpets is an \textit{$\varepsilon$-subdivision (rel.\ $Q_1, \dots, Q_N$)} if there exist peripheral disks $Q_1,\dots,Q_N$ of $S$ and an $\varepsilon$-subdivision $\{Y_{\alpha}\}$ of the closed domain $Y\coloneqq \widehat{\C}\setminus \bigcup_{i=1}^N Q_i$, for which 
\begin{enumerate}[(1)]
\item the boundaries of the $2$-cells $Y_{\alpha}$ are contained in $S\setminus \bigcup_{i=N+1}^\infty \partial Q_i$, where $Q_i$, $i\geq N+1$, are the remaining peripheral disks of $S$, and
\item $\{S_{\alpha}\} =\{S\cap Y_{\alpha}\}$, i.e., the Sierpi\'nski carpets in the subdivision $\{S_{\alpha}\}$ arise as the intersections of the $2$-cells in the subdivision $\{Y_{\alpha}\}$ with the original Sierpi\'nski carpet $S$.
\end{enumerate}
\end{definition}

We remark that each peripheral disk of $S$ is either one of the disks $Q_1,\dots,Q_N$, or it is a peripheral disk of one of the carpets $S_{{\alpha}}$ in the $\varepsilon$-subdivision of $S$.

\begin{lemma}\label{lemma:subdivisions}
Let $S,S'\subset \widehat\C$ be Sierpi\'nski carpets. Fix peripheral disks $Q_1,\dots,Q_N$ and $Q_1',\dots,Q_N'$ of $S$ and $S'$, respectively.  Then for each $\varepsilon>0$ there exists $M>N$ and peripheral disks $Q_{N+1},\dots,Q_M$ and $Q_{N+1}',\dots,Q_{M}'$ of $S$ and $S'$, respectively, such that there exists an $\varepsilon$-subdivision of $S$ rel.\ $Q_1,\dots,Q_M$ and an $\varepsilon$-subdivision of $S'$ rel.\ $Q_1',\dots,Q_M'$. Moreover, for each family of orientation-preserving homeomorphisms $g_i\colon \br{Q_i}\to  \br{Q_i'}$, $i\in \{1,\dots,M\}$, the $\varepsilon$-subdivisions can be taken in such a way that they correspond to each other under a homeomorphism $g\colon \widehat\C\to \widehat\C$ that extends $g_i$, $i\in \{1,\dots,M\}$.  
\end{lemma}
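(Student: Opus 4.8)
The plan is to construct the two $\varepsilon$-subdivisions so that, viewed as cell complexes, they are combinatorially isomorphic, with the isomorphism carrying each marked disk $Q_i$ to $Q_i'$, and then to realize this abstract isomorphism by a single homeomorphism $g$ of $\widehat\C$. Granting a matched pair of subdivisions, the map $g$ is assembled piece by piece: on each $\overline{Q_i}$, $i\in\{1,\dots,M\}$, we set $g=g_i$; on the common $1$-skeleton (the union of the arcs bounding the cells) we let $g$ be any homeomorphism realizing the combinatorial isomorphism and agreeing with $g_i$ on $\partial Q_i$; and on each closed $2$-cell $Y_\alpha$ we extend the already-defined boundary homeomorphism $\partial Y_\alpha\to\partial Y_\alpha'$ to a homeomorphism $Y_\alpha\to Y_\alpha'$ via the Jordan--Schoenflies theorem, which applies since the cells are Jordan regions. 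These pieces agree on overlaps by construction and glue to the desired $g$. Thus the whole difficulty is to produce two combinatorially identical subdivisions subject to the prescribed boundary data.

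First I would treat a single carpet, reproving Whyburn's subdivision construction. Fix a triangulation (or square grid) of $\widehat\C$ of mesh less than $\varepsilon/2$ whose vertices are generic. Since the peripheral disks of $S$ form a null sequence, only finitely many have diameter exceeding a threshold $\delta$; I add all of these to the marked list, determining $M$ and $Q_{N+1},\dots,Q_M$. The core step is to push the $1$-skeleton of the grid into $S$: each grid vertex is replaced by a nearby point of $S$ on no peripheral circle, and each grid edge by an arc inside $S$ joining the corresponding points and staying within the mesh. Here I use that $S$ is a Peano continuum, hence locally and arcwise connected, so that short arcs between nearby points exist; that $\bigcup_{i>M}\partial Q_i$ is meager in $S$, so a generic nearby arc meets none of these circles; and that the finitely many large disks $Q_1,\dots,Q_M$ are bypassed by detouring through $S$ (or along their boundaries, which is permitted). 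Letting $Y=\widehat\C\setminus\bigcup_{i=1}^M Q_i$ and taking the $Y_\alpha$ to be the closures of the complementary components of the pushed skeleton in $Y$, one obtains Jordan $2$-cells of diameter less than $\varepsilon$ with boundaries in $S\setminus\bigcup_{i>M}\partial Q_i$; that each $S_\alpha=S\cap Y_\alpha$ is again a carpet follows from Whyburn's characterization, as it is $\widehat\C$ minus a null family of Jordan domains with disjoint closures and empty interior.

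Next I would transfer the abstract complex $\mathcal K$ to $S'$. Each vertex of $\mathcal K$ on a marked circle $\partial Q_i$ must go to a prescribed point of $\partial Q_i'$, namely its image under $g_i$; I place the $S'$-skeleton accordingly and, exactly as above, route the remaining arcs through $S'$ between the designated points, choosing the auxiliary disks $Q_{N+1}',\dots,Q_M'$ among the peripheral disks of $S'$. This yields an embedded copy of $\mathcal K$ in $S'$, but a priori its cells need not be smaller than $\varepsilon$, since the prescribed $Q_i'$ may sit in positions quite different from the $Q_i$. To repair this, wherever an $S'$-cell is too large I subdivide it further by arcs in $S'$ (applying the single-carpet construction to that cell, possibly enlarging $M$ to absorb large disks it exposes) and add the combinatorially corresponding arcs on the $S$-side; the corresponding $S$-cells are already of diameter less than $\varepsilon$, so cutting them keeps them small, while the offending $S'$-cells shrink. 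Iterating finitely many times produces a common refinement with all cells, on both sides, of diameter less than $\varepsilon$ and with matching marked disks.

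The main obstacle is precisely this reconciliation: arranging a single abstract complex realizable inside both carpets with all cells small and the prescribed disks in corresponding combinatorial positions. The two realizations are geometrically unrelated, since identical combinatorics is stretched differently over $\widehat\C$ in $S$ and in $S'$, so the fineness needed for small cells in $S'$ must be mirrored in $S$ and conversely. The enabling facts are the null property of the peripheral family, which confines the large disks to a finite set, and the carpet routing step, which draws arcs through a carpet between prescribed points while avoiding the meager family of remaining peripheral circles. Once the matched refinement is in hand, the Schoenflies assembly of $g$ described above finishes the argument, and taking $M$ one larger if necessary ensures $M>N$.
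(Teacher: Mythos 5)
Your overall architecture (build combinatorially isomorphic cell decompositions with skeleta inside the two carpets, then assemble $g$ by Schoenflies on each $2$-cell) is the right shape, and your Schoenflies assembly at the end matches what the paper does. But the central step of your construction has a genuine gap: you route the grid edges through $S$ and dismiss the countably many remaining peripheral circles with ``$\bigcup_{i>M}\partial Q_i$ is meager in $S$, so a generic nearby arc meets none of these circles.'' Meagerness of a set does not produce arcs avoiding it, and you have set up no space of arcs in which ``generic'' has a meaning. The obstruction is real: $S$ has empty interior and the union of its peripheral circles is dense in $S$, so you cannot freely perturb an arc within $S$; showing that the set of paths in $S$ (with fixed endpoints) missing a single $\partial Q_i$ is \emph{dense} in the space of such paths is essentially the same difficulty as the original problem. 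This is exactly the point where the paper invokes Moore's decomposition theorem: it collapses each small peripheral disk $R_i$ to a point $p_i$ via a map $F\colon\widehat\C\to S^2$ that is injective elsewhere, draws the subdividing Jordan curves in the quotient sphere so that they merely avoid the countable set $\{p_i\}$ (which is trivial), and pulls them back; the preimages are then automatically Jordan curves lying in $S$ and disjoint from every $\partial Q_i$, $i>M$. Without Moore's theorem or an equivalent substitute, your skeleton-pushing step is unjustified.

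A second, related gap is in the transfer to $S'$. You route the edges of the abstract complex $\mathcal K$ through $S'$ independently, between prescribed endpoints, but an embedded graph in the sphere is not determined up to ambient homeomorphism by its abstract isomorphism type and vertex positions: you must also match the cyclic edge orders at vertices and, crucially, the face structure, so that each complementary component of the $S'$-skeleton contains the marked disks $Q_i'$ corresponding to those in its partner face on the $S$-side. Your proposal never controls the isotopy classes of the routed arcs, so the two realizations of $\mathcal K$ need not be equivalent as plane graphs and the face-by-face Schoenflies gluing need not be available. The paper sidesteps this entirely by producing the $S$-side curve system as the image of the $S'$-side system under a single global homeomorphism $\widetilde g$ of the quotient sphere (extending the projected maps $\widetilde g_i$), so correspondence of cells and of marked disks is automatic. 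Your final reconciliation-by-refinement idea is workable in principle (cutting cells only shrinks them), but it inherits both problems above each time a new arc is inserted, and as written it also lets $M$ and the new marked disks depend on the given maps $g_i$, reversing the quantifier order in the statement.
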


Here the correspondence of the subdivisions under $g$ is be understood in the sense that each 2-cell $Y_{\alpha}$ of the subdivision of $S$ is mapped by $g$ onto a 2-cell $Y'_{\alpha}$ of the subdivision of $S'$.

A weaker version of this lemma is used by Whyburn in his topological characterization of the carpet; see \cite[Lemma 1]{Whyburn:theorem}. For the sake of completeness and to make this work self-contained, we include an outline of the proof of Lemma \ref{lemma:subdivisions} later in Section \ref{section:subdivisions}.

\begin{proof}[Proof of Theorem \ref{theorem:homeo}]
Let $\varepsilon_k=1/k$. We fix peripheral circles $C_1,\dots,C_N$ and $C_1',\dots,C_N'$ of $S$ and $S'$, respectively, as in the statement of the theorem, and we consider conformal maps $g_i$, $i\in \{1,\dots,N\}$, from the peripheral disk $Q_i$ of $S$ bounded by $C_i$ onto the peripheral disk $Q_i'$ of $S'$ bounded by $C_i'$. These conformal maps extend to the boundaries by Carath\'eodory's theorem and provide orientation-preserving homeomorphisms $g_i\colon \br {Q_i}\to \br {Q_i'}$. By Lemma \ref{lemma:subdivisions}, we can find peripheral disks $Q_{N+1},\dots,Q_M$ and $Q_{N+1}',\dots,Q_M'$ of $S$ and $S'$, respectively, with conformal maps $g_i\colon \br {Q_i}\to \br {Q_i'}$, $i\in \{N+1,\dots,M\}$, and we can find $\varepsilon_1$-subdivisions of $S$ and $S'$ that correspond to each other under a global homeomorphic extension $f_1\colon \widehat\C\to \widehat\C$  of the maps $g_i\colon \br {Q_i}\to \br {Q_i'}$. Observe that $\dist(f_1(S),S')\leq \varepsilon_1$ and $\dist(S,f_1^{-1}(S'))\leq \varepsilon_1$.

Next, we fix one of the Sierpi\'nski carpets $S_{{\alpha}}$ in the $\varepsilon_1$-subdivision of $S$ that corresponds to a carpet $S_{{\alpha}}'$ in the $\varepsilon_1$-subdivision of $S'$; that is, the corresponding 2-cells are mapped to each other under $f_1$ (however, we do not necessarily have $f_1(S_{{\alpha}})=S_{{\alpha}}'$). Note that the peripheral circles of $S_{{\alpha}}$ and $S_{{\alpha}}'$ necessarily have diameters bounded above by $\varepsilon_1$. 

There is a distinguished peripheral circle of $S_{{\alpha}}$ ($S_{{\alpha}}'$) that separates it from the other carpets in the subdivision of $S$ ($S'$). We call $R_1$ ($R_1'$) the peripheral disk of $S_{{\alpha}}$ ($S_{{\alpha}}'$) bounded by that peripheral circle. Consider the orientation-preserving homeomorphism $h_1\coloneqq f_1\big|_{\br {R_1}}\colon \br {R_1}\to \br{R_1'}$. Now we apply Lemma \ref{lemma:subdivisions} on $S_{{\alpha}}$ and $S_{{\alpha}}'$, to obtain peripheral disks $R_2,\dots,R_{K}$ and $R_2',\dots,R_{K}'$ of $S_{{\alpha}}$ and $S'_{{\alpha}}$, respectively, together with conformal maps $h_i\colon \br{R_i} \to \br{R_i'}$, $i\in \{2,\dots,K\}$, and we can find $\varepsilon_2$-subdivisions of $S_{{\alpha}}$ and $S_{{\alpha}}'$ (rel.\ $R_i$ and $R_i'$, $i=1,\dots,K$, respectively) that correspond to each other under a homeomorphic extension $f_{{\alpha},2}\colon \widehat{\C}\setminus R_1\to \widehat{\C}\setminus R_1'$ of the maps $h_i$, $i\in \{2,\dots,K\}$.

We repeat this procedure for each of the carpets $S_{\alpha}$ in the $\varepsilon_1$-subdivision of $S$. If we collect the $\varepsilon_2$-subdivisions of the carpets of the form $S_{{\alpha}}\subset S$ and $S_{{\alpha}}'\subset S'$, we obtain $\varepsilon_2$-subdivisions of the carpets $S$ and $S'$. Patching together the resulting maps of the form $f_{{\alpha},2}$ yields a homeomorphism $f_2\colon \widehat\C\to \widehat\C$ under which the $\varepsilon_1$- and $\varepsilon_2$-subdivisions of $S,S'$ are in correspondence. The map $f_2$ agrees with $f_1$ on $\bigcup_{i=1}^M \br {Q_i}$ and maps conformally the peripheral disks of $S$ having diameter larger than $\varepsilon_2$ onto some peripheral disks of $S'$. Note that by construction $f_2^{-1}$ maps conformally the peripheral disks of $S'$ having diameter larger than $\varepsilon_2$ onto some peripheral disks of $S$. Moreover, the $L^\infty$-distance of $f_1$ and $f_2$ is bounded by $\varepsilon_1$; the same statement holds for the inverses of the maps. Also, note that $\dist(f_2(S),S')\leq \varepsilon_2$ and $\dist(S,f_2^{-1}(S'))\leq \varepsilon_2$.

We repeat the procedure of the last two paragraphs for each carpet in the $\varepsilon_2$-subdivision of $S,S'$. Inductively, for each $k\in \N$ we obtain a homeomorphism $f_k$ of $\widehat\C$ with the following properties:
\begin{enumerate}
\item\label{prop:convergence} The $L^\infty$-distance of $f_k,f_m$ and the $L^\infty$-distance of $f_k^{-1},f_m^{-1}$ are bounded by $\varepsilon_k$ for $m\geq k$.
\item\label{prop:distance}$\dist(f_k(S),S')\leq \varepsilon_k$ and $\dist(S,f_k^{-1}(S')) \leq \varepsilon_k$ for all $k\in \N$.
\item\label{prop:conformal} For each $k\in \N$ the map $f_k$ maps conformally peripheral disks $Q_{i_1},\dots,Q_{i_k}$ of $S$ onto corresponding peripheral disks $Q_{i_1}',\dots, Q_{i_k}'$ of $S'$. In fact, for each $l\in \N$ the sequence $\{f_k\}_{k\in \N}$ is eventually constant on $Q_{i_l}$.
\item\label{prop:exhaust} The sequences of peripheral disks $Q_{i_k},Q_{i_k}'$, $k\in \N$, exhaust the sequences of peripheral disks of $S,S'$, respectively.
\end{enumerate}

Since $\varepsilon_k\to 0$, \eqref{prop:convergence} implies that $f_k$ converges uniformly in $\widehat{\C}$ to a homeomorphism $f$ of $\widehat{\C}$. By \eqref{prop:distance}, we have $f(S)=S'$. Finally, \eqref{prop:conformal} and \eqref{prop:exhaust} imply that each peripheral disk of $S$ is mapped conformally onto a peripheral disk of $S'$. In particular, $f$ is conformal on $\widehat{\C}\setminus S$.
\end{proof}

\section{Proof of Proposition \ref{prop:zeromeasure}}

An easily verifiable fact that we will use here is that for each $\varepsilon>0$ there exists a self-similar square carpet $S\subset \C$ with Hausdorff dimension bounded above by $1+\varepsilon$. 

We fix $\varepsilon>0$. Suppose that $K\subset \C$ is a compact set with empty interior and let $B$ be a large ball containing $K$. We consider a dyadic square decomposition of the open set $B\setminus K$. In each dyadic square $Q\subset B\setminus K$ in this decomposition we consider a square carpet $S_Q$ with Hausdorff dimension bounded by $1+\varepsilon$, scaled so that it ``fits" in the square $S$; that is to say that the boundary of the unbounded peripheral disk of $S_Q$ is precisely the boundary of $Q$. Then we take $L$ to be the closure of the union of all carpets $S_Q$. 

Note that $L\supset K$, since $\inter(K)=\emptyset$, and that $L$ is a Sierpi\'nski carpet, as one can see by Whyburn's criterion (see the Introduction): $\inter(L)=\emptyset$ (since $\inter(K)=\emptyset$) and the complement of $L$ in $\widehat \C$ consists of countably many Jordan regions with disjoint closures and diameters shrinking to $0$. Moreover, $L\setminus K$ has Hausdorff dimension at most $1+\varepsilon$, since it is the union of the circle $\partial B$ with  countably many carpets $S_Q$, each having Hausdorff dimension at most $1+\varepsilon$.

Now, we apply Theorem \ref{theorem:homeo}, which provides us with a homeomorphism $f\colon \widehat \C \to \widehat\C$ that maps the carpet $L$ onto a fixed square carpet with Hausdorff dimension at most $1+\varepsilon$. In particular, $f(L)$ has Lebesgue measure zero. By post-composing with a M\"obius transformation of $\widehat{\C}$, which does not affect the Hausdorff dimension, we may assume that $f(\infty)=\infty$ and thus, we have a homeomorphism $f\colon \C\to  \C$ that is conformal in $\C\setminus L$, as desired.\qed

\section{Outline of proof of Lemma \ref{lemma:subdivisions}}\label{section:subdivisions}

The proof relies on the following version of Moore's theorem \cite{Moore:theorem}:
\begin{theorem}{\cite[Corollary 6A, p.\ 56]{Daverman:decompositions}}\label{moore}\index{Moore's theorem}
Let $\{R_i\}_{i\in \N}$ be a sequence of Jordan regions in the sphere $\widehat\C$ with mutually disjoint closures and diameters converging to $0$, and consider an open set $\Omega \supset \bigcup_{i\in \N} \br {R_i}$. Then there exists a continuous, surjective map $F\colon \widehat{\C} \to S^2$ that is the identity outside $\Omega$ and it induces the decomposition of $\widehat \C$ into the sets $\{\br {R_i}\}_{i\in \N}$ and points. In other words, there are countably many distinct points $p_i\in S^2$, $i\in \N$, such that $F^{-1}(p_i)= \br {R_i}$ for $i\in \N$ and $F$ is injective on $\widehat\C\setminus \bigcup_{i\in \N}\br {R_i}$ with  $F( \widehat\C\setminus \bigcup_{i\in \N} \br {R_i})= S^2\setminus \{p_i:i\in \N\}$.
\end{theorem}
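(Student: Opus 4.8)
The plan is to realize $F$ as the quotient map of a suitable decomposition of $\widehat{\C}$ and then to appeal to the topological characterization of the sphere as a quotient. Let $G$ be the decomposition of $\widehat{\C}$ whose nondegenerate elements are exactly the closed Jordan regions $\br{R_i}$, $i\in\N$, and whose remaining elements are the singletons $\{x\}$ for $x\notin\bigcup_{i\in\N}\br{R_i}$. Since each $R_i$ is a Jordan region, the Schoenflies theorem identifies $\br{R_i}$ with a closed topological disk; in particular $\br{R_i}$ is a continuum whose complement $\widehat{\C}\setminus\br{R_i}$ is connected, so no element of $G$ separates the sphere. The first substantive step is to verify that $G$ is upper semicontinuous, and this is where both hypotheses enter: given an element $g\in G$ and an open set $U\supseteq g$, only finitely many $\br{R_j}$ have diameter exceeding $\tfrac12\dist(g,\partial U)$, and these finitely many are disjoint from a small enough neighborhood $V$ of $g$, so every other element meeting $V$ has small diameter and therefore lies in $U$. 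Equivalently, if $x_n\to x$ with $x_n\in g_n\in G$, then either infinitely many $g_n$ coincide with the element containing $x$, or the $g_n$ are eventually distinct sets $\br{R_{i_n}}$ whose diameters tend to $0$, forcing $\limsup g_n=\{x\}$.

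With upper semicontinuity and the nonseparation property in hand, I would invoke the Moore--Roberts form of Moore's theorem: an upper semicontinuous decomposition of $S^2$ into continua none of which separates $S^2$ has quotient space homeomorphic to $S^2$. This yields a homeomorphism $h\colon\widehat{\C}/G\to S^2$, and composing with the quotient projection $\pi\colon\widehat{\C}\to\widehat{\C}/G$ produces a continuous surjection $F_0=h\circ\pi$ that collapses each $\br{R_i}$ to a distinct point, is injective off $\bigcup_i\br{R_i}$, and maps the complement onto $S^2$ minus those points. This already delivers every assertion of the theorem except the normalization that $F$ be the identity on $\widehat{\C}\setminus\Omega$.

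The remaining and most delicate point is to arrange $F\equiv\id$ off $\Omega$. Because each $\br{R_i}$ is a closed disk, hence cellular (a nested intersection of closed Jordan regions), the decomposition $G$ is cellular, so by the Shrinkability Criterion its quotient map is a near-homeomorphism, that is, $\pi$ (equivalently $F_0$) is a uniform limit of homeomorphisms of $\widehat{\C}$. I would run this shrinking argument with all supports confined to $\Omega$: since $\bigcup_i\br{R_i}\subset\Omega$ and the diameters shrink, one enumerates the $\br{R_i}$ and successively applies homeomorphisms $\psi_i$, each equal to the identity outside a small neighborhood $N_i\subset\Omega$ of $\br{R_i}$ chosen to avoid the finitely many larger regions, that compress $\br{R_i}$ into a ball of radius $<\varepsilon_i$ about a chosen point $p_i$, with $\varepsilon_i\to0$. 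Controlling the supports and the radii so that the infinite composition converges uniformly, the limit $F$ is continuous, restricts to the identity on $\widehat{\C}\setminus\Omega$, collapses each $\br{R_i}$ to $p_i$, and is injective elsewhere.

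The genuine engine here is Moore's theorem itself, whose proof that the quotient of such a decomposition is a sphere is deep; taking it as a black box, the real work left is the simultaneous shrinking with supports inside $\Omega$. The main obstacle will be handling the accumulation of the regions $R_i$: one must order them and choose the radii $\varepsilon_i$ and neighborhoods $N_i$ carefully (summably and compatibly with the finitely-many-large-regions bound above) so that the limit map is well defined, equals the identity off $\Omega$, and remains injective off the collapsed disks.
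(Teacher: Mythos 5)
The paper does not actually prove this statement: it is imported verbatim from Daverman's book (the cited Corollary 6A), so there is no internal proof to compare against. Your proposal in effect reconstructs the standard decomposition-theoretic argument that underlies the cited source, and its skeleton is sound: the null-sequence hypothesis (disjoint closures, diameters tending to $0$) gives upper semicontinuity exactly as you argue; each $\br{R_i}$ is a non-separating continuum, so Moore's theorem identifies the quotient with $S^2$; and the normalization $F\equiv \id$ off $\Omega$ is obtained by shrinking with supports confined to the saturated open set $\Omega$. That is the right division of labor, and you correctly identify that Moore alone does not give the identity off $\Omega$ --- the shrinking does.

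Two cautions. First, the step ``the decomposition is cellular, so by the Shrinkability Criterion its quotient map is a near-homeomorphism'' is stated too broadly: cellularity of the individual elements does \emph{not} imply shrinkability of the decomposition in general (there are Bing-type cellular decompositions of $S^3$ that are not shrinkable). What rescues you is precisely the structure you have --- a \emph{null sequence} of cellular sets, and ambient dimension $2$ --- so you should invoke the shrinkability of null-sequence cellular decompositions (or deduce it from Moore's theorem in $S^2$), not bare cellularity. Second, the deferred bookkeeping is the genuine content, and your one-at-a-time scheme does not obviously converge as stated: the homeomorphism compressing $\br{R_i}$ moves points by roughly $\diam(N_i)$, which need not be summable over an arbitrary enumeration, and each compression can enlarge not-yet-handled regions. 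The standard repair is to shrink in \emph{rounds}: at stage $n$, simultaneously compress all regions of current diameter at least $1/n$, using pairwise disjoint supports in $\Omega$ whose diameters tend to $0$; this yields uniform convergence, that the limit is the identity off $\Omega$, and --- the point you would still need to verify --- that the point preimages of the limit map are exactly the sets $\br{R_i}$ and singletons, which is where injectivity off $\bigcup_i \br{R_i}$ comes from. You flag this honestly, and it is carried out in the cited reference; as an outline, your proposal is correct and follows the same route as the source the paper relies on.
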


Here $S^2$ is identical to $\widehat{\C}$, but it will be more convenient to have different notation for the target and view $S^2$ mostly from a topological point of view.

First, we consider the peripheral disks $Q_1,\dots,Q_N$ of $S$ and $Q_1',\dots,Q_N'$ of $S'$ given in the statement of Lemma \ref{lemma:subdivisions} and fix $\varepsilon>0$. We append to these collections some peripheral disks $Q_{N+1},\dots,Q_M$ of $S$ and $Q_{N+1}',\dots,Q_M'$ of $S'$ so that the remaining peripheral disks of $S$ and $S'$ have diameters less than $\varepsilon$. Now, we apply Theorem \ref{moore} to the region $\Omega=\widehat{\C}\setminus \bigcup_{i=1}^{M} \br{Q_i}$ and to the remaining peripheral disks $\{R_i\}_{i\in \N}$ of $S$ contained in $\Omega$, all of which have diameter less than $\varepsilon$. This yields a ``collapsing" map $F\colon \widehat{\C}\to S^2$ that collapses each $R_i$ to a point $p_i$, but is injective otherwise. We apply analogously Theorem \ref{moore} to the carpet $S'$ and obtain a map $F'$ that collapses peripheral disks $R_i'$ to points $p_i'$.

Now, we consider given orientation-preserving homeomorphisms $g_i\colon \br {Q_i}\to \br {Q_i'}$, $i\in \{1,\dots,M\}$, as in the statement of Lemma \ref{lemma:subdivisions}. These homeomorphisms ``project" down to homeomorphisms 
$$\widetilde g_i\coloneqq  F'\circ g_i \circ \bigl(F\big|_{\br {Q_i}}\bigr)^{-1}  \colon  F(\br{Q_i})\to F'(\br{Q_i'}), \quad 1\leq i\leq M.$$
These are orientation-preserving homeomorphisms between finitely many disjoint Jordan regions in $S^2$. This implies that there exists a homeomorphic extension $\widetilde g \colon S^2\to S^2$ of $\widetilde g_i$, $i\in \{1,\dots,M\}$.

Consider the countable set $A'=\{\widetilde g(p_i): i\in \N\} \cup \{p_i': i\in \N\}\subset S^2$. This is the union of the set of points $p_i'$ arising from collapsing the peripheral disks of $S'$ (under $F'$) that are different from $Q_1',\dots,Q_M'$, together with the images $\widetilde{g}(p_i)$ of the corresponding points $p_i$ arising from collapsing the peripheral disks of $S$ (under $F$). We fix a $\delta>0$ (to be specified) and a $\delta$-subdivision $\{Y_{\alpha}'\}$ of the closed domain $Y'=S^2\setminus \bigcup_{i=1}^M F'(Q_i')$ so that the boundaries $\partial Y_{\alpha}'$ avoid the countable set $A'$.

We fix a Jordan region $Y_\alpha'$ and consider the preimage $(F')^{-1}(\partial Y_\alpha')$, which is a Jordan curve since $F'$ is injective on  it. Let $X_\alpha'\subset \widehat{\C}$ be the Jordan region bounded by that Jordan curve with the property that $(F')^{-1}(p_i') \subset X_\alpha'$ for some $p_i'\in Y_\alpha'$; in fact, this will hold for all $p_i'\in Y_\alpha'$, as one can see using homotopy arguments. Then the Jordan regions $\{X_\alpha'\}$ give a subdivision of the closed domain $\widehat{\C}\setminus \bigcup_{i=1}^M \br{Q_i'}$ in the sense of Definition \ref{def:subdivision} and the boundaries of the $2$-cells $X_\alpha'$ are contained in $S'\setminus \bigcup_{i=M+1}^\infty \partial Q_i'$ (here $Q_i'$, $i\geq M+1$, are the remaining peripheral disks of $S'$). Finally, the sets $S_\alpha'\coloneqq S'\cap Y_\alpha'$ are Sierpi\'nski carpets by Whyburn's criterion (stated before Theorem \ref{theorem:homeo}), since $S_\alpha'\subset S'$ has empty interior and the complement of $S_\alpha'$ consists of Jordan regions with disjoint closures, whose diameters converge to $0$; what is important here is that $\partial X_\alpha'= (F')^{-1}(\partial Y_\alpha')$ does not intersect any peripheral circle $\partial Q_i'$, $i\geq M+1$. Therefore, the collection $\{S_\alpha'\}$ is a subdivision of the carpet $S'$, as in Definition \ref{def:subdivision}. 
One can ensure that this subdivision is an $\varepsilon$-subdivision by choosing a sufficiently small $\delta>0$ in the previous paragraph and using the following elementary lemma:
\begin{lemma}
Suppose that $X$ and $Y$ are compact metric spaces and $h\colon X\to Y$ is a continuous, surjective map with the property that the preimage of each point has diameter at most a given number $\varepsilon>0$. Then there exists a $\delta>0$ such that for each set $B\subset Y$ with $\diam(B)<\delta$ we have $\diam(h^{-1}(B))<\varepsilon$. 
\end{lemma}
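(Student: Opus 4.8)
The natural approach is a contradiction argument resting on the sequential compactness of $X$; write $d$ and $d_Y$ for the metrics of $X$ and $Y$. Suppose no $\delta$ works. Then for each $n\in\N$ the value $\delta=1/n$ fails, so there is a set $B_n\subset Y$ with $\diam B_n<1/n$ but $\diam h^{-1}(B_n)\ge\varepsilon$. By the definition of diameter I may select $x_1^n,x_2^n\in h^{-1}(B_n)$ with $d(x_1^n,x_2^n)\ge\varepsilon-1/n$, and since $h(x_1^n),h(x_2^n)\in B_n$ we have $d_Y(h(x_1^n),h(x_2^n))\le\diam B_n<1/n$. It is convenient to package the estimate through the nondecreasing modulus $G(t)\coloneqq\sup\{d(x,x'):d_Y(h(x),h(x'))\le t\}$, for which the trivial bound $\diam h^{-1}(B)\le G(\diam B)$ holds for every $B\subset Y$; thus everything reduces to the behavior of $G$ as $t\to 0^+$.

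The key step is to extract a limiting pair. Passing to subsequences (compactness of $X$) I may assume $x_1^n\to x_1$ and $x_2^n\to x_2$ for some $x_1,x_2\in X$. Then $d(x_1,x_2)=\lim_n d(x_1^n,x_2^n)\ge\varepsilon$, while continuity of $h$ together with $d_Y(h(x_1^n),h(x_2^n))\to 0$ forces $h(x_1)=h(x_2)\eqqcolon y$. Hence $x_1,x_2$ lie in a single fiber and $\diam h^{-1}(y)\ge d(x_1,x_2)\ge\varepsilon$. The same compactness argument, applied to all scales at once, shows $\lim_{t\to 0^+}G(t)=\sup_{y}\diam h^{-1}(y)$, so the entire question is whether this supremum lies below $\varepsilon$.

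The main obstacle is precisely the passage from $\diam h^{-1}(y)\ge\varepsilon$ to a contradiction. The hypothesis supplies only $\diam h^{-1}(y)\le\varepsilon$, so the fiber just produced has diameter exactly $\varepsilon$, which is consistent rather than contradictory; indeed the statement as worded is sharp at the boundary, as the constant map $h\colon[0,1]\to\{*\}$ with $\varepsilon=1$ shows, where every $B$ is a single point yet $\diam h^{-1}(B)=1$. The contradiction — and with it the strict conclusion — is obtained exactly when $\sup_y\diam h^{-1}(y)<\varepsilon$. Here compactness is again decisive: the set $\{(x,x'):h(x)=h(x')\}=(h\times h)^{-1}(\Delta_Y)$ is closed in the compact space $X\times X$, so the supremum of $d$ over it is attained, and the strict bound $\sup_y\diam h^{-1}(y)<\varepsilon$ is therefore equivalent to every fiber having diameter strictly below $\varepsilon$. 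Under that (mild) strengthening one picks $t_0>0$ with $G(t_0)<\varepsilon$ — possible since $G(t)$ decreases to $\sup_y\diam h^{-1}(y)<\varepsilon$ as $t\to 0^+$ — and sets $\delta=t_0$, so that $\diam B<\delta$ gives $\diam h^{-1}(B)\le G(\diam B)\le G(t_0)<\varepsilon$. This is exactly the situation in the application to Lemma~\ref{lemma:subdivisions}: there $h$ is the collapsing map $F'$ (respectively $F$), whose fibers are either single points or the closures $\br{R_i}$ of peripheral disks chosen with $\diam\br{R_i}<\varepsilon$, so all fibers do have diameter strictly below $\varepsilon$ and the argument closes.
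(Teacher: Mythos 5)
The paper offers no proof of this lemma at all---it is stated as an elementary aside---so there is nothing to compare line by line; your compactness argument (extract $x_1^n,x_2^n\in h^{-1}(B_n)$ with $d(x_1^n,x_2^n)\ge\varepsilon-1/n$, pass to convergent subsequences, and land at a pair of points in a single fiber at distance $\ge\varepsilon$) is exactly the standard argument the author presumably had in mind. More valuably, you have caught a genuine imprecision in the statement itself: with fibers only assumed to have diameter \emph{at most} $\varepsilon$, the strict conclusion $\diam(h^{-1}(B))<\varepsilon$ is false, as your constant-map example $h\colon[0,1]\to\{*\}$ with $\varepsilon=1$ shows---every nonempty $B$ is a single point, yet $\diam(h^{-1}(B))=1$. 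The correct hypothesis is the strict one, every fiber of diameter $<\varepsilon$, which by your compactness observation (the supremum of $d$ over the closed set $(h\times h)^{-1}(\Delta_Y)\subset X\times X$ is attained) is equivalent to $\sup_y\diam(h^{-1}(y))<\varepsilon$; your modulus bound $\diam(h^{-1}(B))\le G(\diam B)$ together with $G(t)\to\sup_y\diam(h^{-1}(y))$ as $t\to 0^+$ then closes the proof. You are also right that the application in Section \ref{section:subdivisions} is unharmed: the fibers of the collapsing maps $F$ and $F'$ are singletons or the closures $\br{R_i}$, $\br{R_i'}$ of peripheral disks chosen with diameters strictly less than $\varepsilon$, and since a set and its closure have the same diameter, the strict form of the lemma applies verbatim. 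One stylistic remark: the modulus $G$ is dispensable---once the strict fiber bound $\sup_y\diam(h^{-1}(y))<\varepsilon$ is in hand, the contradiction argument of your opening paragraph already produces $\delta$ directly---but it does no harm and makes the quantitative structure transparent.
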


Now, we consider the preimages $Y_{\alpha}$ under $\widetilde g$ of the Jordan regions $Y_{\alpha}'$ lying in the partition of $Y'=S^2\setminus \bigcup_{i=1}^M F'(Q_i')$. The sets $Y_{\alpha}$ provide a partition of $Y= S^2\setminus \bigcup_{i=1}^M F(Q_i)$ into Jordan regions and the boundaries $\partial Y_{\alpha}$ of the Jordan regions in the partition avoid the countable set $A=\{p_i:i\in \N\}$. One now takes preimages under $F$ of the Jordan curves $\partial Y_{\alpha}$; these provide a subdivision of $S$ as in the previous paragraph. We denote by $X_{\alpha}\subset \widehat{\C}$ the Jordan region bounded by $F^{-1}(\partial Y_{\alpha})$ that contains a carpet $S_{{\alpha}}$ in the subdivision of $S$. Again, if $\delta$ is chosen to be sufficiently small, then by continuity the regions $Y_{\alpha}$ will be small enough, so that the preceding lemma can guarantee that the sets $X_{\alpha}$ yield an $\varepsilon$-subdivision of $S$. 

The map $\widetilde g$ (composed appropriately with $F$ and $(F')^{-1}$) provides a homeomorphism $g$ from the union of the Jordan curves of the form $\partial X_{\alpha}$ onto the union of the Jordan curves of the form $\partial X_{\alpha}'$. This homeomorphism extends the maps $g_i \colon \partial { Q_i}\to \partial{ Q_i'}$, $i\in \{1,\dots,M\}$. The union of the Jordan curves $\partial X_{\alpha}$ is the $1$-skeleton of a $2$-cell decomposition of $\widehat{\C}$, in which the $2$-cells are the sets $X_{\alpha}$, together with the sets $\br {Q_i}$, $i\in \{1,\dots,M\}$. The analogous statement holds for $X_{\alpha}'$ and $\br{Q_i'}$, $i\in \{1,\dots,M\}$. Now, one can extend the homeomorphism $g$ to a homeomorphism of $\widehat{\C}$; see e.g.\ \cite[Chapter 5.2]{BonkMeyer:Thurston}. By construction, the $\varepsilon$-subdivisions of $S$ and $S'$ are in correspondence under $g$, in the sense of the comments following the statement of Lemma \ref{lemma:subdivisions}.\qed

\bibliography{biblio}

\end{document}